\newtheorem*{rep@theorem}{\rep@title}
\newcommand{\newreptheorem}[2]{%
	\newenvironment{rep#1}[1]{%
		\def\rep@title{#2 \ref{##1}}%
		\begin{rep@theorem}}%
		{\end{rep@theorem}}}
\newtheorem{intro_thm}{Theorem}
\newtheorem{intro_cor}[intro_thm]{Corollary}
\newtheorem{intro_quest}{Question}
\newtheorem{intro_conj}{Conjecture}
\newtheorem{lemma}{Lemma}[section]
\newtheorem{cor}[lemma]{Corollary}
\theoremstyle{definition}
\newtheorem{rem}[lemma]{Remark}
\theoremstyle{definition}
\renewcommand{\epsilon}{\varepsilon}
\renewcommand{\theta}{\vartheta}
\renewcommand{\phi}{\varphi}
\newcommand{\twprod}{\mathbin{%
		\ooalign{\raise1.15ex\hbox{$\scriptstyle\sim$}\cr\hidewidth$\times$\hidewidth\cr}%
}}
\newcommand{\twprodboth}{\mathbin{%
		\ooalign{\raise1.15ex\hbox{$\scriptstyle(\sim)$}\cr\hidewidth$\times$\hidewidth\cr}%
}}
\DeclareMathAlphabet{\mathcal}{OMS}{cmsy}{m}{n}
\newcommand{\Z}{\ensuremath{\mathbb{Z}}}
\newcommand{\R}{\ensuremath{\mathbb{R}}}
\newcommand{\N}{\ensuremath{\mathbb{N}}}
\DeclareMathOperator{\MCG}{MCG}
\DeclareMathOperator{\SL}{SL}
\DeclareMathOperator{\ab}{ab}
\DeclareMathOperator{\id}{id}
\DeclareMathOperator{\Spec}{Spec}
\long\def\forget#1{}
\begin{document}

\title[Aspherical 4-manifolds and their geography]{Aspherical 4-manifolds with positive Euler characteristic and their geography}

\author[]{Pietro Capovilla}
\address{Scuola Normale Superiore, Pisa Italy}
\email{pietro.capovilla@sns.it}
\curraddr{University of Cyprus, Nicosia, Cyprus}
\email{capovilla.pietro@ucy.ac.cy}

\thanks{}

\keywords{}
\subjclass[1000]{}
\begin{abstract}
	We present an explicit construction of closed oriented aspherical smooth 4-manifolds with $\chi = \sigma = n$ for every positive integer $n$.
	This proves a conjecture of Edmonds by providing a closed oriented aspherical 4-manifold with Euler characteristic 1, and it shows that the real analogue of the Bogomolov–Miyaoka–Yau inequality fails for aspherical 4-manifolds.
	By the Hitchin-Thorpe inequality, these manifolds do not admit Einstein metrics.
	As a further consequence of our construction, we show that every closed aspherical 3-manifold with amenable fundamental group is virtually the $\pi_1$-injective boundary of an aspherical 4-manifold with vanishing Euler characteristic and vanishing simplicial volume, thereby answering questions of Edmonds and of L{\"o}h–Moraschini–Raptis up to finite covers.
\end{abstract}

\maketitle

\section{Introduction}

The geometry and the topology of closed aspherical manifolds is known to be highly constrained.
A classical conjecture, attributed to Singer, predicts that the $L^2$-Betti numbers of aspherical manifolds vanish away from the middle dimension.
By a result of L{\"u}ck \cite[Theorem~5.1]{Lue94}, this general conjecture implies, in dimension 4, the following conjecture on the geography of aspherical manifolds.
\begin{intro_conj}
	\label{conj:geography}
	If $X$ is a closed oriented aspherical 4-manifold, then
	\begin{equation}
		\label{eq:geography}
		\chi(X)\geq |\sigma(X)|,
	\end{equation}
	where $\chi(X)$ and $\sigma(X)$ denote the Euler characteristic and the signature of $X$.
\end{intro_conj}
For further background on Conjecture~\ref{conj:geography}, we refer the reader to \cite{Lue94, Lue02} and the references therein, and to \cite{Edm20, ADCL25} for more recent discussions.

Conjecture~\ref{conj:geography} implies, in particular, another classical conjecture attributed to Hopf and Thurston, which predicts that the Euler characteristic of aspherical 4-manifolds is non-negative.
We show that (\ref{eq:geography}) is sharp for every non-negative Euler characteristic.
\begin{intro_thm}
	\label{thm:aspherical_manif}
	For every natural number $n \in \N$, there is a closed oriented connected aspherical smooth 4-manifold $X_n$ such that $\chi(X_n)=\sigma(X_n)=n$.
\end{intro_thm}
The existence of $X_1$ was conjectured by Edmonds \cite[Conjecture~1]{Edm20}.
Moreover, the manifolds $X_1$, $X_5$, $X_7$, and $X_{11}$ appear to be the first known examples of closed oriented aspherical 4-manifolds with Euler characteristic 1, 5, 7, and 11, respectively \cite{Edm20}.

Theorem~\ref{thm:aspherical_manif} further shows that the real analogue of the Bogomolov–Miyaoka–Yau inequality -- namely, the inequality $\chi \geq 3 |\sigma|$, which is known to hold, for instance, for compact complex surfaces of general type \cite{Bog78, Miy77, Yau77}, for surface bundles over surfaces \cite{Kot98, Ham} and for some geometrically decomposable manifolds \cite{DCG} -- does not hold for oriented closed aspherical 4-manifolds. 
In particular, Theorem~\ref{thm:aspherical_manif} disproves \cite[Conjecture~4]{Edm20}.
On the other hand, the manifolds $X_n$ satisfy the Singer conjecture (see Remark~\ref{rem:singer_conjecture}).

The Hitchin-Thorpe inequality provides a topological obstruction to the existence of Einstein metrics on smooth 4-manifold, stating that $2\chi \geq 3 |\sigma|$ for every closed oriented 4-manifold admitting such a metric \cite{Hit74}. 
It follows that the manifolds $X_n$ do not admit Einstein metrics for any $n\geq 1$.
This yields the first aspherical examples with non-vanishing signature (see \cite{DCG} and the references therein).\\

The existence of $X_1$ allows us to realize aspherical manifolds with every admissible Euler characteristic also in higher dimensions.
Let $\Spec_\chi(m)\subseteq \Z$ denote the spectrum of the possible Euler characteristics of closed orientable aspherical $m$-manifolds.
By Poincaré duality, we have $\Spec_\chi(m)={0}$ whenever $m$ is odd.
A further consequence of Poincaré duality is that $\Spec_\chi(m)\subseteq 2\cdot \Z$ whenever $m\equiv 2 \pmod{4}$.
Regarding aspherical manifolds specifically, the Hopf–Thurston conjecture predicts that $\Spec_\chi(m)\subseteq (-1)^k\cdot \N$ when $m = 2k$.

Thanks to Theorem \ref{thm:aspherical_manif}, we can realize every Euler characteristic compatible with these obstructions by taking suitable products.
Define $\omega(m) \in \Z$ as follows: set $\omega(m)=1$ when $m\equiv0\pmod4$, $\omega(m)=-2$ when $m\equiv2\pmod4$, and $\omega(m)=0$ otherwise.
\begin{intro_cor}
	\label{cor:asph_every_dim}
	For every natural number $m \geq 1$, we have $\omega(m)\cdot \N \subseteq \Spec_\chi(m)$.
\end{intro_cor}
\begin{proof}
	If $m$ is odd, then $\omega(m)=0$ and the $m$-torus realizes $\chi=0$.
	If $m \equiv 0\pmod{4}$, then every non-negative integer $n$ can be realized as the Euler characteristic of products $X_1\times \cdots\times X_1\times X_n$.
	If $m\equiv 2\pmod{4}$, then every non-positive even integer can be realized as the Euler characteristic of products $X_1\times \cdots\times X_1\times \Sigma_g$, where $\Sigma_g$ is a closed surface of genus $g\geq 1$.
\end{proof}

The strategy underlying the construction of the manifolds in Theorem~\ref{thm:aspherical_manif} is to glue together some building blocks along their boundary components.
Let $M$ be an oriented closed aspherical 3-manifold, not necessarily connected. 
A null-cobordism $W$ of $M$ is called an \emph{aspherical filling} of $M$ if it is an oriented compact connected aspherical 4-manifold with $\pi_1$-injective boundary $\partial W = M$.

Since gluing aspherical spaces along $\pi_1$-injective aspherical subspaces preserves asphericity \cite{Whi78} \cite[Theorem 3.1]{Edm20}, the union of two aspherical fillings along diffeomorphic boundary components is again an aspherical filling.
Moreover, the Euler characteristic satisfies the sum formula
\[
\chi(A\cup B)=\chi(A) + \chi(B)-\chi(A\cap B),
\]
thus it is additive when gluing together 4-manifolds along their boundary components.
Similarly, Novikov additivity implies that the signature of oriented 4-manifolds is additive in presence of (orientation-reversing) gluings along boundary components \cite[Theorem~5.3]{Kir89}.
These additivity properties will be used repeatedly in our proof of Theorem~\ref{thm:aspherical_manif}.
All constructions in this work are carried out in the smooth category, and all manifolds are implicitly assumed to be smooth.

The building blocks used in the construction of the manifolds in Theorem~\ref{thm:aspherical_manif} are of three types.
The contribution to the Euler characteristic comes from certain minimal-volume complex hyperbolic manifolds with cusps, classified by Di Cerbo and Stover \cite{DCS18} (see Lemma~\ref{lem:DiCerboStover}).
To cap off these manifolds, we use torus bundles over punctured spheres (Lemma~\ref{lem:prod}), together with a new building block whose boundary is a semi-bundle (Lemma~\ref{lem:double_covering_trick}).
Both of the latter types of pieces have vanishing Euler characteristic.\\

It is well known that every closed oriented 3-manifold bounds some oriented 4-manifold. 
By relative hyperbolization \cite{DJ91, DJW01}, every closed aspherical 3-manifold admits an aspherical filling.
A natural question concerns the minimal complexity of an aspherical filling of a given aspherical $3$-manifold.
This type of isoperimetric problem admits several formulations, depending on the chosen notion of complexity.
In both Edmonds’s work \cite{Edm20} and the present one, the Euler characteristic plays a central role.
\begin{intro_quest}[{\cite{Edm20}}]
	\label{quest:euler_char}
	Does every closed oriented aspherical 3-manifold admit an aspherical filling with vanishing Euler characteristic?
\end{intro_quest}

Another relevant invariant is Gromov’s \emph{simplicial volume} \cite{Gro82}.
The relationship between the simplicial volume and the Euler characteristic for aspherical manifolds is poorly understood.
In particular, Gromov asked whether the vanishing of the simplicial volume of a closed aspherical manifold implies the vanishing of its Euler characteristic \cite[p.~232]{Gro93}.
This conjecture remains open, although recent work by L{\"o}h, Moraschini and Raptis \cite{LMR22} explores new approaches to the problem.
In particular, exploiting the additivity properties of both invariants, they formulate an equivalent version of Gromov’s conjecture for manifolds with boundary.
\begin{intro_quest}[{\cite[Question~2.13]{LMR22}}]
	\label{que:rel_gromov_conj}
	Let $W$ be an oriented compact aspherical 4-manifold with $\pi_1$-injective aspherical boundary. 
	Does the vanishing of the simplicial volume of $W$ imply the vanishing of its Euler characteristic?
\end{intro_quest}
This extension to manifolds with boundary provides a systematic framework to compare the filling properties of the simplicial volume and the Euler characteristic.
In this setting, L{\"o}h, Moraschini and Raptis also posed the following question.
\begin{intro_quest}[{\cite[Question~3.28]{LMR22}}]
	\label{quest:sv}
	Does every closed oriented aspherical 3-manifold with amenable fundamental group admit an aspherical filling with vanishing simplicial volume?
\end{intro_quest}
An affirmative answer to both Question~\ref{que:rel_gromov_conj} and Question~\ref{quest:sv} would in turn yield an affirmative answer to Question~\ref{quest:euler_char}, at least for 3-manifolds with amenable fundamental group.
The possible interplay between Question~\ref{que:rel_gromov_conj} and Question~\ref{quest:sv} in the construction of closed aspherical manifolds with small odd Euler characteristic is also highlighted in \cite[Proposition~3.31]{LMR22}.
On the other hand, the manifolds $X_n$ from Theorem~\ref{thm:aspherical_manif} have positive simplicial volume (see Remark \ref{rem:sv_examples}).\\

In this work, we answer both Question~\ref{quest:euler_char} and Question~\ref{quest:sv} for certain families of torus bundles and semi-bundles. 
As a further consequence, we show that both statements hold \emph{virtually}, that is, up to finite covers, for all manifolds with amenable fundamental group.
\begin{intro_thm}
	\label{thm:virtual_small_aspherical_fillings}
	Every oriented closed connected aspherical 3-manifold with amenable fundamental group virtually admits an aspherical filling with vanishing Euler characteristic and vanishing simplicial volume.
\end{intro_thm}
In Section~\ref{sec:torus_bundle_boundaries}, we describe the building blocks of our construction which are already available in the literature.
The main contribution of this work is the construction of aspherical fillings with semi-bundle boundaries, which is presented in Section~\ref{sec:semi_bundle_boundaries}, together with the proof of Theorem~\ref{thm:aspherical_manif}.
The proof of Theorem~\ref{thm:virtual_small_aspherical_fillings} can be found in Section~\ref{sec:computing_simplicial_volume}.

\subsection*{Acknowledgments}
I am grateful to my PhD advisor Roberto Frigerio for his guidance and interest in this project.
I am deeply indebted to Stefano Riolo, who pointed out the work of Di Cerbo and Stover \cite{DCS18} after reading a previous version of this manuscript.
I also wish to thank Marco Moraschini for suggesting Corollary~\ref{cor:asph_every_dim}, and Ervin Had\v{z}iosmanovi\'c, Federica Bertolotti and Elena Bogliolo for their helpful comments.
Finally, this work is supported by the start-up funding of Prof. Neofytidis, and has been written within the activities of GNSAGA group of INdAM.

\section{Filling torus bundles}
\label{sec:torus_bundle_boundaries}
The goal of this section is to describe a collection of building blocks that will play a crucial role in the proof of Theorem~\ref{thm:aspherical_manif} and Theorem~\ref{thm:virtual_small_aspherical_fillings}.

Given an oriented manifold $M$, we denote by $-M$ a copy of $M$ with the opposite orientation.
We observe that $M$ has an aspherical filling with Euler characteristic $n \in \Z$ if and only if $-M$ does, since the Euler characteristic of an orientable manifold is independent of its orientation.

We denote by $I$ the interval $[0,1]$, and by $T$ the 2-dimensional torus.
Fix a basis of $\pi_1(T)$ so that the (positive) mapping class group $\MCG(T)$ of the torus can be identified with the group $\SL_2(\Z)$.
We denote by $\id$ the identity element of $\SL_2(\Z)$.

For $\phi \in \SL_2(\Z)$, we denote by $T(\phi)$ the \emph{torus bundle} with monodromy $\phi$, i.e., the 3-manifold fibering over the circle with torus fiber obtained from the product $T \times [0,1]$ by identifying $(x,0)$ with $(f(x),1)$, where $f$ is any diffeomorphism representing $\phi$.
It is well known that this construction does not depend on the choice of $f$ and produces an aspherical manifold.
Torus bundles are classified by the conjugacy class of their monodromy in $\MCG(T) = \SL_2(\Z)$ \cite[Theorem 2.6]{Hat}.
If $T$ and $[0,1]$ are oriented, then $T \times [0,1]$ carries the product orientation, which induces an orientation on $T(\phi)$, since $\phi$ is represented by an orientation-preserving diffeomorphism.
In this situation, we have $T(\phi^{-1}) = -T(\phi)$.

\begin{rem}
	\label{rem:orientations}
	We make explicit our conventions regarding orientations.
	If $W$ is an oriented manifold with boundary $\partial W$, the boundary $\partial W$ is oriented according to the outward-normal first convention, see \cite[Definition 1.1.2]{GS99}.
	Let $p\colon E \to B$ be a smooth fiber bundle whose base space $B$ and fiber $F$ are oriented manifolds with volume forms $\omega_B$ and $\omega_F$, respectively. 
	The total space $E$ is then oriented by the volume form $\omega_E = \omega_F \wedge p^*(\omega_B)$.
\end{rem}

We begin by considering torus bundles over punctured disks or spheres.
We orient the punctured disk as an oriented submanifold of $\mathbb{R}^2$ so that the outer boundary circle is oriented clockwise, while each inner boundary circle is oriented counterclockwise.
A torus bundle over such a punctured disk restricts to a torus bundle over each boundary component. 
The bundle is completely determined by the monodromies around the punctures: the monodromy around the outer circle is equal to the inverse of the composition of the monodromies around the inner circles, taken in the appropriate order.
\begin{lemma}[{\cite[Lemma~6.1]{Edm20}}]
	\label{lem:prod}
	Let $\phi_1,\dots, \phi_n\in \SL_2(\Z)$ be such that $\phi_n \cdots \phi_1 = \textup{id}$. Then there is a bundle over the $n$-punctured 2-sphere with torus fiber and monodromies around the punctures given by $\phi_1, \dots, \phi_n$.
	
	In particular, there is a compact oriented connected aspherical 4-manifold $W$ with $\chi(W)=0$ and $\pi_1$-injective aspherical boundary such that
	\[
	\partial W = T(\phi_1)\sqcup \dots \sqcup T(\phi_n).
	\]
\end{lemma}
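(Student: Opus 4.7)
The plan is to build the bundle over the $n$-punctured sphere as a flat $T$-bundle associated to a suitable holonomy representation, and then to read off the boundary description, asphericity, $\pi_1$-injectivity of the boundary, and vanishing Euler characteristic.

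Concretely, let $\Sigma$ denote the compact oriented surface obtained from $S^2$ by removing $n$ disjoint open disks; this is a compact surface with $n$ boundary circles $c_1,\dots,c_n$, and $\pi_1(\Sigma)$ admits a presentation $\langle x_1,\dots,x_n \mid x_1 x_2\cdots x_n = 1\rangle$, where $x_i$ is represented by $c_i$ with the boundary orientation. First I would remark that, since the free group $F_{n-1} \cong \pi_1(\Sigma)$ is free on $x_1,\dots,x_{n-1}$, the assignment $x_i \mapsto \phi_i$ determines a well-defined homomorphism $\rho\colon \pi_1(\Sigma) \to \SL_2(\Z) = \MCG(T)$ precisely because the hypothesis $\phi_n\cdots\phi_1 = \id$ (up to choosing the indexing so that it matches the boundary orientation convention of Remark~\ref{rem:orientations}) ensures that the single relator is sent to the identity.

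Next I would realise $\rho$ geometrically. Choose, for each $\phi_i$, an orientation-preserving diffeomorphism $f_i\colon T\to T$ representing it, and form the flat $T$-bundle $W \to \Sigma$ associated to $\rho$: explicitly, $W = (\widetilde\Sigma \times T)/\pi_1(\Sigma)$, where $\pi_1(\Sigma)$ acts diagonally, by deck transformations on $\widetilde\Sigma$ and via $\rho$ on $T$. Since $\widetilde\Sigma$ is contractible and $T$ is aspherical, $\widetilde\Sigma\times T$ is a model for $B\pi_1(T)$, and the quotient $W$ is aspherical (alternatively, the long exact sequence of homotopy groups for the fibration $T\to W\to \Sigma$ forces $\pi_i(W)=0$ for $i\geq 2$ from asphericity of both base and fibre). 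Connectedness of $W$ is immediate because both fibre and base are connected, and $W$ is orientable because the structure group lies in $\SL_2(\Z)$, which acts on $T$ by orientation-preserving diffeomorphisms, and $\Sigma$ is oriented. Then, as in Remark~\ref{rem:orientations}, we orient $W$ by the product rule; the restriction of this bundle over each boundary circle $c_i$ is, by construction, the mapping torus $T(\phi_i)$, and one checks that with the outward-normal-first orientation this identifies $\partial W$ with $T(\phi_1)\sqcup\cdots\sqcup T(\phi_n)$ as oriented manifolds.

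For $\pi_1$-injectivity of the boundary, I would use naturality of the homotopy long exact sequences for the two fibrations $T\to T(\phi_i)\to c_i$ and $T\to W\to \Sigma$, combined with the commutative diagram of short exact sequences of fundamental groups. The inclusion $c_i \hookrightarrow \Sigma$ is $\pi_1$-injective (each boundary generator $x_i$ is part of a free generating set of $\pi_1(\Sigma)$), the fibre map on $\pi_1(T)$ is the identity, and a short diagram chase (the five-lemma suffices) gives that $\pi_1(T(\phi_i))\to \pi_1(W)$ is injective. Finally, multiplicativity of the Euler characteristic in fibre bundles gives $\chi(W)=\chi(\Sigma)\cdot\chi(T)=0$, since $\chi(T)=0$.

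The only step that requires genuine care, rather than routine verification, is matching the orientation conventions: one must check that the orientation induced on each boundary torus bundle by the outward-normal-first convention coincides with the orientation of $T(\phi_i)$ determined by the monodromy $\phi_i$, using the convention on the orientations of inner and outer boundary circles of a punctured disk fixed just before the statement. Once the relation $\phi_n\cdots\phi_1=\id$ is aligned with this convention, the construction goes through verbatim; all the other ingredients (existence of the flat bundle, asphericity, injectivity on $\pi_1$, and the Euler characteristic computation) are formal.
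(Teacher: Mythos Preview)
Your proof is correct and follows precisely the standard construction underlying this lemma. The paper itself does not give a detailed proof: the lemma is stated with a citation to Edmonds, and the preceding paragraph merely sketches the idea (a torus bundle over a punctured disk is determined by the monodromies around the punctures, subject to the relation that their ordered product is the identity). Your argument via the flat $T$-bundle associated to the holonomy representation $\rho\colon \pi_1(\Sigma)\to \SL_2(\Z)$, together with the routine verifications of asphericity (long exact sequence of the fibration), $\pi_1$-injectivity of the boundary (five-lemma on the map of short exact sequences), and $\chi(W)=0$ (multiplicativity), is exactly what is implicit in that sketch.
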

The signature of the torus bundle from Lemma~\ref{lem:prod} can be computed through Meyer signature formula \cite[Satz~5]{Mey73}: $\sigma(W)=\mathcal{M}(\phi_1)+ \dots + \mathcal{M}(\phi_n)$, where $\mathcal{M}\colon \SL_2(\Z)\rightarrow \tfrac{1}{3}\Z$ is the Meyer function of genus one, as described in \cite[Section~5]{Mey73}.
We remark that, compared to \cite{Mey73}, our formula differs by a sign, which arises from the different convention used here to orient $W$.

Let $\SL_2(\Z)'$ denote the derived subgroup of $\SL_2(\Z)$. 
The following observation is well known and can be found, for instance, in \cite[Section~5.6]{Foz} or \cite[Claim~1]{Moz04}. For the reader’s convenience, we include a proof here.
\begin{lemma}
	\label{lem:commutator}
	Let $\phi \in \SL_2(\Z)'$. 
	There is a compact oriented connected aspherical 4-manifold $W$ with $\chi(W)=0$ and $\pi_1$-injective aspherical boundary such that $\partial W = T(\phi)$.
\end{lemma}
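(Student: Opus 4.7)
The key observation is that $\phi\in\SL_2(\Z)'$ means $\phi$ can be written as a product of commutators, $\phi=[\alpha_1,\beta_1]\cdots[\alpha_g,\beta_g]$ for some $\alpha_i,\beta_i\in\SL_2(\Z)$ and some $g\geq 1$. Such a product is exactly the relation that arises in the fundamental group of an oriented compact surface $\Sigma_{g,1}$ of genus $g$ with one boundary component: $\pi_1(\Sigma_{g,1})$ is free on $2g$ generators $a_i,b_i$, and the boundary loop represents $[a_1,b_1]\cdots[a_g,b_g]$. This suggests replacing the punctured sphere in Lemma~\ref{lem:prod} with $\Sigma_{g,1}$.

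The plan is to build $W$ as a torus bundle over $\Sigma_{g,1}$ whose monodromy around $\partial\Sigma_{g,1}$ is $\phi$. Concretely, define a homomorphism
\[
\rho\colon \pi_1(\Sigma_{g,1})\longrightarrow \SL_2(\Z)=\MCG(T), \qquad a_i\mapsto \alpha_i,\ b_i\mapsto \beta_i,
\]
which is well-defined because $\pi_1(\Sigma_{g,1})$ is free. The associated flat bundle $W\to\Sigma_{g,1}$ with fiber $T$ can be constructed by taking the quotient of $\widetilde\Sigma_{g,1}\times T$ by the diagonal $\pi_1(\Sigma_{g,1})$-action (trivial on the first factor, via $\rho$ on the second), exactly as in the proof of Lemma~\ref{lem:prod}; alternatively, one decomposes $\Sigma_{g,1}$ into a $4g$-gon and glues the pieces in the appropriate way. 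By construction, the restriction of $W$ to $\partial\Sigma_{g,1}\cong S^1$ is the torus bundle with monodromy $[\alpha_1,\beta_1]\cdots[\alpha_g,\beta_g]=\phi$, so $\partial W=T(\phi)$.

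It then remains to check the three required properties. Multiplicativity of the Euler characteristic in fiber bundles gives $\chi(W)=\chi(\Sigma_{g,1})\cdot\chi(T)=(1-2g)\cdot 0=0$. Asphericity of $W$ follows from the fibration $T\to W\to\Sigma_{g,1}$: both base and fiber are aspherical (for $g\geq 1$), so the long exact sequence in homotopy shows that $\pi_k(W)=0$ for $k\geq 2$. For $\pi_1$-injectivity of $\partial W\hookrightarrow W$, one compares the short exact sequences coming from the fibrations $T\to T(\phi)\to S^1$ and $T\to W\to \Sigma_{g,1}$: they fit into a commutative diagram whose left vertical map is the identity on $\pi_1(T)$ and whose right vertical map $\pi_1(S^1)\to\pi_1(\Sigma_{g,1})$ is injective (since $\partial\Sigma_{g,1}$ represents a nontrivial element of a free group), so a short diagram chase yields injectivity of $\pi_1(T(\phi))\to\pi_1(W)$.

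The only genuine input is the first sentence: that $\phi\in\SL_2(\Z)'$ admits a commutator expression. Everything after that is a standard bundle construction, and the main point where care is needed is the $\pi_1$-injectivity verification on the boundary, but this is immediate from the diagram above.
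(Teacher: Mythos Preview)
Your proof is correct and reaches the same endpoint as the paper---a torus bundle over a compact orientable surface with one boundary component whose boundary monodromy is $\phi$---but the route is slightly different and worth contrasting. The paper first reduces to a single commutator $\phi=[g,h]$, then uses the observation that $\phi h=ghg^{-1}$ is conjugate to $h$ (so $T(\phi h)\cong T(h)$): applying Lemma~\ref{lem:prod} to the triple $(\phi h,\,h^{-1},\,\phi^{-1})$ gives a pair-of-pants bundle, and self-gluing the two diffeomorphic boundary components produces a bundle over a once-punctured torus filling $T(\phi)$; the general case is obtained by gluing these pieces together. You bypass this decomposition entirely by writing down the monodromy representation $\pi_1(\Sigma_{g,1})\to\SL_2(\Z)$ in one stroke, which is cleaner and makes the verification of asphericity and $\pi_1$-injectivity more transparent via the fibration exact sequence. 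The paper's modular argument, on the other hand, keeps everything phrased in terms of Lemma~\ref{lem:prod} and explicit gluings, which matches the style used elsewhere in the paper. One small point you glossed over: depending on the orientation convention for $\partial\Sigma_{g,1}$, the boundary may come out as $T(\phi^{-1})=-T(\phi)$ rather than $T(\phi)$; the paper handles this by simply reversing orientation at the end, and you should note the same.
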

\begin{proof}
	Since $\varphi \in \SL_2(\Z)'$ can be written as a product of commutators, it suffices to construct aspherical fillings for these commutators and glue them together using the bundle from Lemma~\ref{lem:prod}.
	Asphericity and vanishing of the Euler characteristic are preserved, because the gluings take place along $\pi_1$-injective aspherical boundary components with vanishing Euler characteristic.
	Hence, it is enough to prove that every commutator in $\SL_2(\Z)$ admits an aspherical filling with vanishing Euler characteristic.
	
	Let $\varphi = [g,h]$ for some $g,h \in \SL_2(\Z)$. 
	It follows that $\varphi h$ is conjugated to $h$ in $\SL_2(\Z)$, hence $T(\phi h)$ and $T(h)$ are diffeomorphic \cite[Theorem 2.6]{Hat}. 
	By Lemma~\ref{lem:prod}, there is a torus bundle $V$ over a pair of pants such that
	\[
	\partial V = T(\phi h) \sqcup T(h^{-1}) \sqcup T(\phi^{-1}).
	\]
	Gluing the diffeomorphic boundary components of $V$ yields an aspherical filling $W$ of $T(\varphi^{-1})$, and hence of $T(\varphi) = -T(\varphi^{-1})$.
	By construction, $W$ is the total space of a torus bundle over a surface with boundary. In particular, it has vanishing Euler characteristic.
\end{proof}
\begin{cor}
	\label{cor:modulo_derived_subgroup}
	If $\phi, \psi \in \SL_2(\Z)$ are congruent modulo $\SL_2(\Z)'$, then $T(\phi)$ has an aspherical filling with Euler characteristic $n \in \N$ if and only if $T(\psi)$ does.
\end{cor}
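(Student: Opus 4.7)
The plan is to exhibit a cobordism of vanishing Euler characteristic between $T(\phi)$ and $T(\psi)$, and then glue an aspherical filling of $T(\phi)$ on one side to obtain an aspherical filling of $T(\psi)$ with the same Euler characteristic. Symmetry between $\phi$ and $\psi$ then gives the biconditional.

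To build the cobordism, I would set $c := \phi \psi^{-1}$, which lies in $\SL_2(\Z)'$ by hypothesis, and note the identity $\phi^{-1} \cdot c \cdot \psi = \textup{id}$. Applying Lemma~\ref{lem:prod} to the triple $(\psi, c, \phi^{-1})$ yields a compact oriented connected aspherical 4-manifold $V$ with $\chi(V) = 0$ and $\pi_1$-injective aspherical boundary
\[
\partial V = T(\psi) \sqcup T(c) \sqcup T(\phi^{-1}).
\]
Lemma~\ref{lem:commutator} applied to $c \in \SL_2(\Z)'$ provides an aspherical filling $W_c$ of $T(c)$ with $\chi(W_c) = 0$. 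If $W$ is an aspherical filling of $T(\phi)$ with $\chi(W) = n$, then, since the Euler characteristic is orientation independent and $T(\phi^{-1}) = -T(\phi)$, reversing the orientation of $W$ produces an aspherical filling of $T(\phi^{-1})$ with Euler characteristic $n$.

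The final step is to glue $V$, $W_c$, and the reoriented $W$ along the matching boundary tori $T(c)$ and $T(\phi^{-1})$. Since the pieces are aspherical and the gluings take place along $\pi_1$-injective aspherical subspaces, the resulting 4-manifold $W'$ is aspherical with $\pi_1$-injective aspherical boundary $T(\psi)$, and the additivity formula together with $\chi(T) = 0$ gives $\chi(W') = \chi(V) + \chi(W_c) + \chi(W) = n$. Exchanging the roles of $\phi$ and $\psi$ (noting that $c^{-1} = \psi\phi^{-1} \in \SL_2(\Z)'$ as well) yields the converse implication.

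The only subtle point I expect is bookkeeping for orientations: one must check that the boundary components inherited from $V$ carry the correct orientations so that the pieces $W_c$ and $-W$ can be glued in an orientation-compatible way. This is essentially routine using the conventions of Remark~\ref{rem:orientations} and the observation that both $T(c)$ and $T(\phi^{-1})$ admit aspherical fillings in either orientation (by applying Lemma~\ref{lem:commutator} to $c^{-1}$ and reversing orientations of $W$ as needed), so no genuine obstacle arises.
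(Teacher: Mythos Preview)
Your proposal is correct and follows essentially the same route as the paper: build a pair-of-pants torus bundle via Lemma~\ref{lem:prod} with boundary $T(\psi)\sqcup T(c)\sqcup -T(\phi)$, cap off $T(c)$ using Lemma~\ref{lem:commutator}, and glue the given filling of $T(\phi)$. The only cosmetic point is that you need not reverse the orientation of $W$: since $\partial V$ already contains $T(\phi^{-1})=-T(\phi)$, you glue $W$ itself (with $\partial W = T(\phi)$) directly to $V$ in an orientation-compatible way, exactly as you anticipate in your closing remark.
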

\begin{proof}
	By assumption, we can write $\phi = \psi h$ for some $h \in \SL_2(\Z)'$. Lemma~\ref{lem:prod} then provides an oriented aspherical manifold $W$ with $\chi(W)=0$ and $\pi_1$-injective boundary
	\[
	\partial W = T(\psi)\sqcup T(h) \sqcup -T(\phi).
	\]
	Since $h$ belongs to the derived subgroup of $\SL_2(\Z)$, Lemma~\ref{lem:commutator} ensures that the boundary component $T(h)$ can be capped off by an aspherical filling without changing the Euler characteristic. 
	The claim then follows.
\end{proof}

Lemma~\ref{lem:commutator} shows that torus bundles with monodromy in $\SL_2(\Z)'$ provide examples for which Question~\ref{quest:euler_char} can be answered affirmatively, i.e., they admit aspherical fillings with vanishing Euler characteristic. 
Let
\[
B = \begin{pmatrix} 1 & 1 \\ 0 & 1 \end{pmatrix}
\]
be the element of $\SL_2(\Z)$ representing a positive Dehn twist about a non-separating simple closed curve.
While this representative is not unique (all such twists are conjugate), we will use this specific matrix throughout.
It is well known that the abelianization $\SL_2(\mathbb{Z})^{\ab}$ of $\SL_2(\mathbb{Z})$ is a finite cyclic group of order 12 generated by the class of $B$ (see, for instance, \cite[Theorem VIII.2]{New72}).
Therefore, by Corollary~\ref{cor:modulo_derived_subgroup}, verifying Question~\ref{quest:euler_char} for all torus bundles reduces to considering powers of $B$.
In the next section, we prove that Question~\ref{quest:euler_char} can be answered affirmatively for torus bundles whose monodromy is an even power of $B$ (see Corollary~\ref{cor:even_powers_are_fillable}), which is the key step in our proof of Theorem~\ref{thm:aspherical_manif}.

The only building block that contributes to the Euler characteristic in the manifolds appearing in Theorem~\ref{thm:aspherical_manif} is the compact manifold obtained by truncating the cusps of one of the minimal-volume smooth complex hyperbolic surfaces admitting a smooth toroidal compactification, as classified by Di Cerbo and Stover in \cite{DCS18}.
\begin{lemma}[{\cite{DCS18}}]
	\label{lem:DiCerboStover}
	There is a compact oriented connected aspherical 4-manifold $W$ with $\chi(W)=\sigma(W)=1$ and $\pi_1$-injective aspherical boundary
	\[
	\partial W = T(B)\sqcup T(B^3).
	\]
\end{lemma}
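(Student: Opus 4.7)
The plan is to obtain $W$ by truncating the cusps of one of the minimal-volume smooth complex hyperbolic surfaces with smooth toroidal compactification classified by Di Cerbo--Stover \cite{DCS18}, and then reading off the boundary as a disjoint union of torus bundles.

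First, I would recall the structure of the cusps. Let $X = \mathbb{H}^2_{\mathbb{C}}/\Gamma$ be a finite-volume smooth complex hyperbolic surface admitting a smooth toroidal compactification $\overline{X}$. Near each cusp of $X$, a horospherical neighborhood is diffeomorphic to $N \times [0, \infty)$, where $N$ is a closed orientable nilmanifold realized as a circle bundle over the $2$-torus $T$. Truncating these cusp neighborhoods produces a compact oriented $4$-manifold $W \subset X$ whose boundary is the disjoint union of these nilmanifolds. The inclusion $W \hookrightarrow X$ is a homotopy equivalence, and $X$ is aspherical as a quotient of the contractible symmetric space $\mathbb{H}^2_{\mathbb{C}}$ by a torsion-free lattice, so $W$ is aspherical as well; each boundary component is aspherical and $\pi_1$-injective because its fundamental group is a cusp subgroup of $\Gamma$.

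Second, I would reinterpret each boundary component as a torus bundle. The standard description of Heisenberg-type nilmanifolds identifies a circle bundle over $T$ with Euler number $k$ with the torus bundle $T(B^k)$, where $B$ is the Dehn twist matrix from Section~\ref{sec:torus_bundle_boundaries}. In the toroidal compactification setting, this Euler number is (up to sign) the self-intersection number of the corresponding elliptic curve at infinity in $\overline{X}$. For the invariants, since each boundary nilmanifold has $\chi = 0$ one has $\chi(W) = \chi(X) = \chi(\overline{X})$; similarly $\sigma(W) = \sigma(\overline{X})$, the latter being computable from the Chern numbers of $\overline{X}$ via Hirzebruch's signature formula. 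The Di Cerbo--Stover classification then contains an explicit example with $\chi(\overline{X}) = \sigma(\overline{X}) = 1$ and exactly two elliptic curves at infinity of self-intersections $-1$ and $-3$, producing $\partial W = T(B) \sqcup T(B^3)$.

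The main obstacle is not in the general truncation construction, which is essentially canonical, but in matching the specific combinatorial data of the Di Cerbo--Stover example to the orientations and monodromies fixed in Section~\ref{sec:torus_bundle_boundaries}: verifying that the outward-normal orientation on $\partial W$ combined with the identification of circle bundles as torus mapping tori yields precisely the monodromies $B$ and $B^3$, rather than their inverses or conjugates. This sign-bookkeeping at each cusp is where the care is needed.
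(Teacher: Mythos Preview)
Your approach is the same as the paper's---truncate the cusps of a Di Cerbo--Stover surface and identify the boundary nilmanifolds as torus bundles---but the signature computation contains a genuine error. You write ``similarly $\sigma(W) = \sigma(\overline{X})$'', apparently by analogy with the Euler characteristic argument. This fails: while $\chi$ is insensitive to excising neighborhoods of the elliptic curves (they have $\chi=0$), the signature is not. A tubular neighborhood $N(D_i)$ of an elliptic curve $D_i$ with self-intersection $D_i^2<0$ is a disk bundle over a torus, and its intersection form is the $1\times 1$ matrix $(D_i^2)$, so $\sigma(N(D_i))=-1$. Novikov additivity on the decomposition $\overline{X}=W\cup N(D_1)\cup N(D_2)$ then gives
\[
\sigma(W)=\sigma(\overline{X})-\sigma(N(D_1))-\sigma(N(D_2))=\sigma(\overline{X})+2.
\]
In the Di Cerbo--Stover example one actually has $\sigma(\overline{X})=-1$ (from Hirzebruch's formula, since $c_1^2(\overline{X})=\bar c_1^2 + D_1^2 + D_2^2 = 3-1-3=-1$ and $c_2(\overline{X})=1$), so $\sigma(W)=1$ as claimed---but not because it equals $\sigma(\overline{X})$. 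Your stated expectation that the classification contains an example with $\sigma(\overline{X})=1$ is therefore also off; the correction term from the cusp neighborhoods is essential.
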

\begin{proof}
	Let $\mathbb{H}^2_\mathbb{C}$ denote the complex hyperbolic plane (of real dimension 4) and let $\Gamma$ denote a torsion free non-uniform lattice of isometries of $\mathbb{H}^2_\mathbb{C}$.
	Di Cerbo and Stover classify in \cite{DCS18} those complex hyperbolic surfaces $\mathbb{H}^2_\mathbb{C}/\Gamma$ that admit a smooth toroidal compactification $(X,D)$ satisfying
	\[
	3\bar{c}_2(X,D)=\bar{c}_1^2(X,D)=3,
	\]
	where $\bar{c}_1$ and $\bar{c}_2$ denote the logarithmic Chern numbers of the pair $(X,D)$. 
	Here $X$ is a smooth projective variety (hence a closed manifold of real dimension 4) and $D$ is a boundary divisor consisting of the union of smooth disjoint elliptic curves (hence tori of real dimension 2) such that $X\setminus D$ is biholomorphic to $\mathbb{H}^2_\mathbb{C}/\Gamma$ (see also \cite{HS96}).
	Their classification yields exactly five such toroidal compactifications: the first is obtained by blowing up an abelian surface at a point, while the remaining four arise from bielliptic surfaces, each blown up once \cite[Theorem 1.1]{DCS18}.
	
	Let	$(X,D)$ be one of these latter four compactifications. For our purposes, the specific choice among them is irrelevant.
	We define $W$ to be the compact core of the cusped manifold $X\setminus D = \mathbb{H}^2_\mathbb{C}/\Gamma$, i.e., the compact manifold obtained from $X\setminus D$ by truncating the cusps, with the orientation induced from the complex structure. 
	It is immediate that $W$ is aspherical with $\pi_1$-injective aspherical boundary.
	We now verify that $W$ also satisfies the remaining properties in the statement.
	
	The divisor $D$ consists of two disjoint elliptic curves $D_1$ and $D_2$ with self-intersection numbers $D_1^2=-1$ and $D_2^2=-3$.
	A regular neighborhood of $D_i$ in $X$ is a disk bundle over a torus, whose boundary is therefore a circle bundle with Euler number $D_i^2$ (see, for example, \cite[Exercise~1.4.11(b)]{GS99}).
	It follows that $W$ has two boundary components, consisting of circle bundles over a torus with Euler number 1 and 3, respectively.
	Since every circle bundle over a torus with Euler number $k\in \Z$ is diffeomorphic to the torus bundle over the circle with monodromy $B^k$ (see, for instance, \cite[Exercise~11.4.11]{Mar}), it follows that $W$ has the claimed boundary components.
	This can be also read from the presentation of the parabolic subgroups of $\Gamma$ in \cite[Section~7.1]{DCS18}.
	
	Let $c_1(X)$ and $c_2(X)$ denote the ordinary Chern numbers of $X$.
	Since $D$ is a union of elliptic curves, we have that
	\[
	\chi(W)=\chi(X\setminus D)=\chi(X)-\chi(D) = \chi(X) = c_2(X) = \bar{c}_2(X)
	\]
	(see, for example, \cite[p. 5]{DCS18}).
	From $3\bar{c}_2(X,D)=\bar{c}_1^2(X,D)=3$, it follows that $\chi(W)=1$.
	
	The signature of $W$ can be computed as in \cite[Section~5]{DCG}. 
	For the convenience of the reader, we briefly include the argument.
	We first compute the signature of $X$. 
	By the adjunction formula, we have that $\bar{c}_1^2(X,D) = c_1^2(X)-D_1^2 - D_2^2$ (see, for example, \cite[p. 6]{DCS18}), which implies $c_1^2(X)=-1$.
	The Hirzebruch signature formula then gives
	\[
	\sigma(X)=\frac{1}{3}(c_1^2(X)-2c_2(X))=-1.
	\]
	
	In order to compute the signature of $W$, we invoke Novikov additivity on the decomposition $X= W \cup N(D_1) \cup N(D_2)$, where $N(D_i)$ denotes the regular neighborhood of $D_i$ inside $X$. Since the self-intersection of $D_i$ is negative, we have $\sigma(N(D_i))=-1$. 
	We conclude that
	\[
	\sigma(W)=\sigma(X)-\sigma(N(D_1))-\sigma(N(D_2)) = 1. \qedhere
	\]
\end{proof}
Thanks to Lemma \ref{lem:DiCerboStover}, in order to obtain a closed oriented aspherical 4-manifold with Euler characteristic 1 it is enough to show that $T(B)\sqcup T(B^3)$ admits an aspherical filling with vanishing Euler characteristic. By Lemma~\ref{lem:prod}, this reduces to showing that $T(B^{-4})=-T(B^4)$ admits an aspherical filling with vanishing Euler characteristic.

\section{Filling semi-bundles}
\label{sec:semi_bundle_boundaries}
In this section we construct additional building blocks for our proof of Theorem~\ref{thm:aspherical_manif}. They are characterized by having semi-bundle boundaries and represent the main contribution of this work.

Let $K$ denote a Klein bottle.
There exists a unique interval bundle $N = K\twprod I$ over $K$ with orientable total space.
Its boundary is the orientable double cover of $K$, namely a torus, which is $\pi_1$-injective in $N$. 
Equivalently, $N$ can be described as the quotient of $S^1\times S^1\times [0,1]$ by the involution
\[
\sigma(x,y,z)=(x+\pi, -y, 1-z),
\]
where the circle $S^1$ is identified with $\R/2\pi\Z$ \cite[Section 2.2]{Hat}.
We endow $S^1$ and $[0,1]$ with the standard orientation induced from $\mathbb{R}$, and $S^1 \times S^1 \times [0,1]$ with the corresponding product orientation.
Since $\sigma$ is orientation-preserving, $N$ inherits the orientation of $S^1 \times S^1 \times [0,1]$, which we fix throughout this section.

Let $C_x$ and $C_y$ denote two circles in $S^1 \times S^1 \times \{0\}$ defined by fixing the coordinate $y$ and $x$, respectively.
Their images in $N$ define horizontal and vertical directions in the boundary torus $\partial N = T$.
Choosing orientations for $C_x$ and $C_y$ determines a basis for the fundamental group of $\partial N = T$, which we fix once and for all.
With respect to this basis, the positive mapping class group $\MCG(T)$ can be identified with $\SL_2(\Z)$.

Given $\phi \in \SL_2(\Z)$, the \emph{semi-bundle} associated with $\phi$ is the manifold 
\[
N(\phi) = N \cup_\phi -N,
\]
obtained by gluing two copies of $N$ with opposite orientations along their boundaries via (a representative of) $\phi$.
Since $\phi$ is represented by orientation-preserving diffeomorphisms and $-N$ carries the opposite orientation of $N$, the manifold $N(\phi)$ is oriented.
Moreover, $N(\phi)$ is also aspherical, since it is obtained by gluing aspherical manifolds along their $\pi_1$-injective aspherical boundaries.

The manifold $N(\phi)$ is foliated by tori parallel to $\partial N$, together with two Klein bottles lying at the cores of the two copies of $N$ \cite[Section 2.2]{Hat} \cite[Section 11.4.3]{Mar}.
The regular neighborhoods of these Klein bottles are canonically diffeomorphic to $N$, endowed with opposite induced orientations.

We now introduce a self-diffeomorphism of $N$ that will be used later.
Consider the involution
\[
S^1\times S^1 \times [0,1]\rightarrow S^1\times S^1 \times [0,1], \qquad (x,y,z)\mapsto (-x,y,z).
\]
This map commutes with $\sigma$ and therefore induces a self-diffeomorphism
\[
\xi \colon N \rightarrow N
\]
which reverses the orientation \cite[Section 2.2]{Hat}. 
With respect to the chosen coordinates on $\partial N$, the map induced by $\xi$ on $\partial N$ is represented by the matrix
\[
\tau = \begin{pmatrix} -1 & 0 \\ 0 & 1 \end{pmatrix}.
\]
\begin{lemma}
	\label{lem:double_covering_trick}
	Let $\phi \in \SL_2(\Z)$.
	There is a compact oriented connected aspherical 4-manifold $W$ with $\chi(W) = 0$ and $\pi_1$-injective aspherical boundary consisting of
	\[
	\partial W = -N(\phi) \sqcup N(\phi)\sqcup T(\phi \tau \phi^{-1}\tau).
	\]
	In particular, $T(\phi \tau \phi^{-1} \tau)$ has an aspherical filling with $\chi = 0$.
\end{lemma}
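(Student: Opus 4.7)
The plan is to construct $W$ as a free orientation-preserving $\Z/2$-quotient of an auxiliary torus bundle $\tilde W$ produced by Lemma~\ref{lem:prod}; this is the titular \emph{double covering trick}. The foundational observation, which I would verify first, is that $N(\phi)$ admits a canonical connected double cover diffeomorphic to the torus bundle $T(\phi\tau\phi^{-1}\tau)$. One obtains it by replacing each copy of $N$ in the decomposition $N(\phi) = N \cup_\phi -N$ with the orientable double cover $\tilde N = T \times I$ (whose deck involution $\sigma$ restricts on $\partial\tilde N$ to a matrix conjugate in $\mathrm{GL}_2(\Z)$ to $\tau$) and lifting the gluing $\phi$ equivariantly; a direct computation with the explicit gluing formulas yields the monodromy $\phi\tau\phi^{-1}\tau$ (up to orientation and basis conventions).

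Next, I would use Lemma~\ref{lem:prod} to construct $\tilde W$ as a torus bundle over a four-holed sphere $\Sigma$ with four boundary monodromies $\phi\tau\phi^{-1}\tau, (\phi\tau\phi^{-1}\tau)^{-1}, \phi\tau\phi^{-1}\tau, (\phi\tau\phi^{-1}\tau)^{-1}$, arranged so that their product is trivial. The resulting $\tilde W$ is a compact oriented aspherical 4-manifold with $\chi(\tilde W) = 0$, $\pi_1$-injective aspherical boundary, and four torus bundle boundary components.

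Then I would define an orientation-preserving involution $\iota$ on $\tilde W$. On the base $\Sigma$, viewed as a planar region symmetric under reflection across an axis, $\iota$ acts as this reflection: it fixes setwise the two boundary circles with monodromy $\phi\tau\phi^{-1}\tau$ (each with two fixed points on the axis) and swaps the other two. The lift to $\tilde W$ is constructed so that at each of the four base fixed points, $\iota$ acts on the torus fiber as the free deck involution $\sigma \colon T \to T$; elsewhere the action is extended by equivariance. The key algebraic fact enabling global consistency is that $\sigma$ conjugates $\phi\tau\phi^{-1}\tau$ to its inverse (the matrix identity $\tau'[\phi,\tau]\tau' = [\phi,\tau]^{-1}$ follows from $\tau\tau' = -\mathrm{id}$ being central), which is precisely the monodromy-compatibility condition needed for $\sigma$ to serve as the fiber action at both fixed points on each invariant boundary circle. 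The only potentially non-free points of $\iota$ lie above base fixed points, and $\sigma$ is free there, so $\iota$ is free on $\tilde W$.

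Taking $W = \tilde W/\iota$ yields a compact oriented connected aspherical 4-manifold with $\chi(W) = \chi(\tilde W)/2 = 0$ and $\pi_1$-injective aspherical boundary. By construction, the two fixed torus bundle boundary components of $\tilde W$ quotient via the deck involution (identified in the first step) to two copies of $N(\phi)$ with opposite orientations, while the two swapped components quotient to a single copy of $T(\phi\tau\phi^{-1}\tau)$, giving the claimed boundary. The hardest step is the bookkeeping of lifts, orientations, and monodromies needed to ensure $\iota$ is simultaneously free, orientation-preserving, and produces the correct boundary components. The ``in particular'' statement then follows by gluing $N(\phi)\times I$ (which has boundary $-N(\phi) \sqcup N(\phi)$ and vanishing Euler characteristic) along the two semi-bundle boundary components of $W$.
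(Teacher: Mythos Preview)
Your strategy is genuinely different from the paper's. The paper does not build $W$ as a quotient of a torus bundle; instead it constructs $W$ directly by taking two thickened semi-bundles $W_- = N(\phi)\times[0,1]$ and $W_+ = N(\phi)\times[2,3]$ and gluing them along the regular neighborhoods $R,R'\cong N$ of the two core Klein bottles in $N(\phi)\times\{1\}$ and $N(\phi)\times\{3\}$, using the orientation-reversing involution $\xi\colon N\to N$. Asphericity, $\pi_1$-injectivity of the boundary, orientability, and $\chi=0$ then follow immediately from the fact that all pieces and gluing loci are aspherical with $\pi_1$-injective inclusions. The third boundary component is read off as two copies of $T\times I$ glued by $\tau$ and $\phi\tau\phi^{-1}$, giving $T(\phi\tau\phi^{-1}\tau)$. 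No double cover enters the argument.

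Your approach is morally the same construction viewed upstairs (indeed the paper's $W$ has a double cover which is a torus bundle over a surface), but as written it has a real gap. The fixed locus of a reflection of the four-holed sphere is a union of arcs, not four isolated points, so the fibrewise action of $\iota$ must be specified coherently along these arcs, and the phrase ``extended by equivariance'' hides the actual content: Lemma~\ref{lem:prod} produces \emph{some} torus bundle with the prescribed boundary monodromies, but it does not produce one with a built-in $\Z/2$-symmetry covering $r$. You would need to construct $\tilde W$ equivariantly from the start, e.g.\ by building it over each half of $\Sigma$ and gluing along the fixed arcs via a lift of $\sigma$, checking that the monodromy relations match. Once you do that carefully you are essentially reproducing the paper's gluing downstairs. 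The algebraic identity $\tau\mu\tau=\mu^{-1}$ for $\mu=\phi\tau\phi^{-1}\tau$ is correct and is the necessary compatibility, but it is not by itself sufficient to conjure the global involution on an arbitrary bundle with those boundary monodromies.
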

\begin{proof}
	We construct $W$ by gluing together two copies of $N(\phi)\times [0,1]$ along the regular neighborhoods of the Klein bottles in $N(\phi)\times \{1\}$ via the map $\xi$.
	
	More precisely, let $W_{-} = N(\phi)\times [0,1]$ and $W_+=N(\phi)\times[2,3]$, where $[0,1]$ carries the standard orientation from $\R$, while $[2,3]$ is endowed with the opposite orientation. 
	Both $W_-$ and $W_+$ are equipped with the product orientation.
	With these choices, we have
	\[
	N(\phi)\times \{0\}=-N(\phi),\qquad N(\phi)\times \{1\} = N(\phi),
	\]
	where the orientation is induced from $W_-$ by the outward-normal-first convention. Similarly,
	\[
	N(\phi)\times \{2\}=N(\phi),\qquad N(\phi)\times \{3\} = -N(\phi).
	\]
	
	Let $K$ and $K'$ be the Klein bottles forming the cores of the two copies of $N$ inside $N(\phi)$, and let $R$ and $R'$ denote their corresponding regular neighborhoods in $N(\phi)$, oriented consistently with $N(\phi)$.
	Both $R$ and $R'$ are canonically diffeomorphic to $N$, but with opposite orientations, say $R=N$ and $R'=-N$.
	Moreover, the complement $N(\phi)\setminus(R\cup R')$ is diffeomorphic to $T\times [0,1]$.

	It follows that $R \times \{1\}$ and $R' \times \{3\}$ can be canonically identified with $N$, while $R' \times \{1\}$ and $R \times \{3\}$ correspond to $-N$.
	The manifold $W$ is obtained from $W_- \sqcup W_+$ by gluing $R \times \{1\} = N$ to $R' \times \{3\} = N$ and $R' \times \{1\} = -N$ to $R \times \{3\} = -N$ via the map $\xi$.
	A schematic representation of $W$ is shown in Figure~\ref{fig:cobordism}.
	
	\begin{figure}[htbp]
		\centering
		\includegraphics[scale=1]{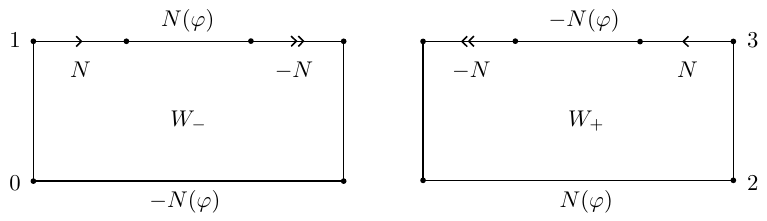}
		\caption{Gluing scheme illustrating Lemma~\ref{lem:double_covering_trick}.}
		\label{fig:cobordism}
	\end{figure}
	
	Since both gluing maps are orientation-reversing, the resulting manifold $W$ is oriented. 
	Moreover, $W$ is aspherical, because $W_-$ and $W_+$ are aspherical, and the gluings are along $\pi_1$-injective aspherical submanifolds.
	
	The boundary of $W$ consists of three components: one copy of $-N(\phi)$ inherited from $W_-$; one copy of $N(\phi)$ inherited from $W_+$; and a third component obtained by gluing together two copies of $N(\phi)\setminus(R\cup R')=T\times [0,1]$. 
	The latter is a torus bundle with monodromy $\phi \tau \phi^{-1}\tau$. 
	All boundary components are $\pi_1$-injective in $W$, and the Euler characteristic of $W$ is
	\[
	\chi(W)=\chi(W_-)+\chi(W_+)-2\chi(N)=0.
	\]
	Gluing the two boundary components $N(\phi)$ and $-N(\phi)$ of $W$ via the identity map then yields an aspherical filling of $T(\phi  \tau  \phi^{-1}  \tau)$ with vanishing Euler characteristic.
\end{proof}
Monodromies of type $\phi \tau \phi^{-1}  \tau$, as in the previous lemma, are commutators in $\mathrm{GL}_2(\Z)$, though not in $\SL_2(\Z)$. 
For our purposes, the key fact is that even powers of positive Dehn twists can be expressed in this way.
\begin{cor}
	\label{cor:even_powers_are_fillable}
	For every positive natural number $k$, the torus bundle $T(B^{2k})$ has an aspherical filling with $\chi = 0$ and $\sigma = 1$.
\end{cor}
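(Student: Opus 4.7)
The plan is to take the filling of $T(B^{2k})$ produced by Lemma~\ref{lem:double_covering_trick} with the specific choice $\phi = B^k$, and then to compute its signature.

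A short matrix computation shows that this choice works: since $\tau^2 = \mathrm{id}$ and
\[
\tau B \tau \;=\; \begin{pmatrix} -1 & 0 \\ 0 & 1 \end{pmatrix} \begin{pmatrix} 1 & 1 \\ 0 & 1 \end{pmatrix} \begin{pmatrix} -1 & 0 \\ 0 & 1 \end{pmatrix} \;=\; \begin{pmatrix} 1 & -1 \\ 0 & 1 \end{pmatrix} \;=\; B^{-1},
\]
we get $\tau B^{-k}\tau = B^k$ and hence $\phi\,\tau\,\phi^{-1}\,\tau = B^k\cdot\tau B^{-k}\tau = B^{2k}$. Lemma~\ref{lem:double_covering_trick} then yields an oriented compact connected aspherical 4-manifold $W$ with $\chi(W)=0$ and $\pi_1$-injective aspherical boundary
\[
\partial W \;=\; -N(B^k) \sqcup N(B^k) \sqcup T(B^{2k}).
\]
Gluing $N(B^k)$ to $-N(B^k)$ via the identity produces an aspherical filling $W'$ of $T(B^{2k})$; additivity of $\chi$ together with $\chi(N(B^k))=0$ gives $\chi(W')=0$, while Novikov additivity, which applies because the gluing is along a closed 3-manifold, gives $\sigma(W')=\sigma(W)$.

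The substantive step is to prove $\sigma(W)=1$. Each building block $W_\pm = N(B^k)\times I$ is a product over an interval and therefore has signature zero, but in $W$ they are glued along the 3-manifolds-with-boundary $R\sqcup R'\cong N\sqcup(-N)$, so Novikov additivity fails and a correction term appears. The natural tool is Wall's non-additivity formula: $\sigma(W)$ equals $\sigma(W_-)+\sigma(W_+)=0$ minus a triple Maslov index of Lagrangian subspaces in the symplectic space $H_1(\partial(R\sqcup R');\R)=H_1(T\sqcup T;\R)\cong\R^4$, determined by the inclusions into $W_\pm$ and into the gluing region. Since the twist $\xi$ acts on $H_1(T)$ via $\tau$, the two Lagrangians coming from the two sides of the gluing differ by this involution, and a direct computation of the resulting Maslov index should yield the value $1$, and in particular a value independent of $k$.

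The main obstacle is precisely this Wall--Maslov signature calculation; the matrix identity and the Euler-characteristic bookkeeping are routine. As a sanity check, the value $\sigma(W')=1$, combined with the contribution $\chi=\sigma=1$ from the Di~Cerbo--Stover block and the Meyer signatures of the bundles produced by Lemma~\ref{lem:prod}, is exactly what one would expect to need in order to realize every pair $\chi(X_n)=\sigma(X_n)=n$ in Theorem~\ref{thm:aspherical_manif}.
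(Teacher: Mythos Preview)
Your plan is exactly the paper's proof: take $\phi=B^k$ in Lemma~\ref{lem:double_covering_trick}, glue the two semi-bundle boundary components to each other (so $\sigma(W')=\sigma(W)$ by Novikov), and compute $\sigma(W)$ via Wall's non-additivity formula for $W=W_-\cup_{X_0}W_+$ with $X_0=N\sqcup(-N)$. The paper carries out the Wall computation you flag as the main obstacle: the three Lagrangians in $H_1(T\sqcup T;\R)\cong\R^4$ are determined by both $\tau$ and $B^k$ (not by $\tau$ alone as your sketch suggests), the quotient $U$ is one-dimensional, and the Wall form evaluates to $-2k$ on a generator, giving correction term $-1$ and hence $\sigma(W)=1$.
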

\begin{proof}
	The claim follows from the fact that $B^k \tau B^{-k} \tau = B^{2k}$, together with Lemma~\ref{lem:double_covering_trick}.
	It remains to compute the signature of the manifold $W$ obtained in Lemma~\ref{lem:double_covering_trick} when $\phi = B^k$.
	By Novikov additivity, the aspherical filling of $T(B^{2k})$ obtained from $W$ has the same signature of $W$.
	
	To compute $\sigma(W)$, we apply Wall's non-additivity formula \cite{Wal69} to the decomposition $W = W_- \cup W_+$.
	We keep the notation from the proof of Lemma~\ref{lem:double_covering_trick}, where $W_- = N(\phi)\times [0,1]$ and $W_+=N(\phi)\times [2,3]$, and the gluing locus is the manifold $X_0=N\sqcup -N$.
	We denote by $X_\pm$ the closure of $\partial W_\pm \setminus X_0$ inside $\partial W_\pm$, so that there are identifications
	\[
	X_- = (N(\phi)\times \{0\}) \sqcup (T_-\times [0,1]), \qquad X_+ = (N(\phi)\times\{2\}) \sqcup (T_+\times [0,1]),
	\]
	where $T_-$ and $T_+$ are tori.
	The common boundary
	\[
	Z\coloneqq \partial X_-=\partial X_+=\partial X_0
	\]
	is the disjoint union of two tori, which we label $Z=T_1\sqcup T_2$.
	The orientations of $W_-$ and $W_+$ induce an orientation of $W$.
	We orient $Z$, $X_-$, $X_0$ and $X_+$ so that $X_-$, $X_0$ and $X_+$ induce the same orientation on $Z$, and such that
	\[
	\partial W_- = X_0 \cup_Z (-X_-), \qquad \partial W_+ = X_+ \cup_Z (-X_0).
	\]
	
	Wall's non-additivity formula \cite{Wal69} states that
	\[
	\sigma(W)=\sigma(W_-)+\sigma(W_+)-\sigma(V,A,B,C),
	\]
	where $\sigma(V,A,B,C)$ is the corresponding Wall correction term.
	Since doubling $W_\pm$ along their boundaries yields null-cobordant closed manifolds, Novikov additivity implies that
	\[
	\sigma(W_-)=\sigma(W_+)=0.
	\]
	It remains to determine the correction term $\sigma(V,A,B,C)$.
	Let
	\[
	V = H_1(Z)=H_1(T_1)\oplus H_1(T_2)
	\]
	denote the first homology with real coefficients of $Z$. 
	This is a real vector space of dimension 4.
	We fix a basis $\{e_1,e_2,e_3,e_4\}$ of $V$ so that $H_1(T_1)=\langle e_1,e_2\rangle $ and $H_1(T_2)=\langle e_3,e_4\rangle$, where $\{e_1,e_2\}$ and $\{e_3, e_4\}$ are the bases fixed above for the torus leaf of a semibundle.
	Let $\Phi \colon V\times V \rightarrow \R$ denote the intersection form on $V$. 
	With respect to the chosen basis, we have 
	\[
	\Phi(\sum_{i=1}^4 \alpha_i e_i, \sum_{j=1}^4 \beta_j e_j) = (\alpha_1 \beta_2 - \beta_1 \alpha_2) - (\alpha_3 \beta_4 - \alpha_4 \beta_3).
	\]
	Let $A$, $B$ and $C$ denote the kernels of the maps on first homology induced by the inclusions of $Z$ into $X_-$, $X_0$ and $X_+$, respectively.
	We define
	\[
	U \coloneqq \frac{A\cap (B+C)}{(A\cap B)+(A\cap C)},
	\]
	and a bilinear form $\Psi\colon U\times U \rightarrow \R$ as follows.
	Given $[a], [a'] \in U$, represented by $a,a' \in A \cap (B+C)$, we choose $b' \in B$ and $c' \in C$ such that $a'+ b' + c' = 0$, and set
	\[
	\Psi([a],[a'])\coloneqq \Phi(a, b').
	\]
	Wall shows that $\Psi$ is indeed a well defined bilinear form on $U$ \cite{Wal69}. 
	The correction term $\sigma(V,A,B,C)$ is defined to be the signature of $\Psi$.
	
	We now determine the subspaces $A$, $B$ and $C$ in our setting.
	The coordinate $e_2$ (resp. $e_4$) is the one that is killed when mapping $T_1$ (resp. $T_2$) to the core Klein bottle of $N$ (resp. $-N$). Therefore,
	\[
	B=\langle e_2, e_4 \rangle.
	\] 
	Next, according to our attaching maps and our choices of orientations, we may assume that $T_1$ and $T_2$ are glued to $T_-\times [0,1]$ and $T_+\times [0,1]$ as described in Figure \ref{fig:signature}.
	\begin{figure}[htbp]
		\centering
		\includegraphics[scale=1]{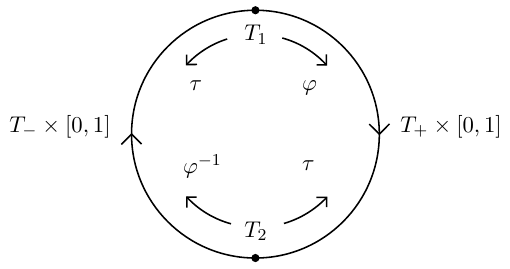}
		\caption{The boundary component $T(\phi\tau\phi^{-1}\tau)$ of $W$ can be described as follows. We glue $T_1$ to $T_-\times \{1\}$ via $\tau$, and to $T_+\times \{0\}$ via $\phi$. Likewise, we glue $T_2$ to $T_-\times \{0\}$ via $\phi^{-1}$, and to $T_+ \times \{0\}$ via $\tau$.}
		\label{fig:signature}
	\end{figure}

	It follows that, when $\phi = B^k$, the subspaces $A$ and $B$ are the kernels of the following linear maps:
	\[
	A = \ker
	\begin{pmatrix}
		-1 & 0 & 1 & -k \\
		0 & 1 & 0 & 1
	\end{pmatrix},
	\qquad
	C= \ker
	\begin{pmatrix}
		1 & k & -1 & 0 \\
		0 & 1 & 0 & 1
	\end{pmatrix}.
	\]

	Solving these systems yields:
	\[
	A = \langle e_1 + e_3, ke_1 + e_2 - e_4 \rangle, \qquad C = \langle e_1 + e_3, k e_1 - e_2 + e_4 \rangle.
	\]
	It follows that $B+ C = V$, so $A \cap (B+C)=A$. 
	Moreover, $A\cap B = \{0\}$, while $A \cap C = \langle e_1 + e_3 \rangle$.
	Thus $U$ is one dimensional, generated by the class of $ke_1 + e_2 - e_4$.
	We now compute the signature of $\Psi$. 
	Let $a = a' = ke_1 + e_2 - e_4$.
	Then $a' + b' + c' = 0$, where $b'=-2e_2 + 2e_4$ and $c' = -ke_1 + e_2 - e_4$.
	Since
	\[
	\Psi([a], [a']) = \Phi(a, b') = -2k,
	\]
	we deduce that $\sigma(V,A,B,C) = -1$, and therefore $\sigma(W)=1$.
\end{proof}
We are now ready to proceed with the proof of Theorem~\ref{thm:aspherical_manif}. 
\begin{proof}[Proof of Theorem~\ref{thm:aspherical_manif}]
	Let $n$ be a non-negative natural number.
	We want to construct a closed orientable aspherical 4-manifold $X_n$ with $\chi(X_n)=\sigma(X_n)=n$.
	Since the 4-torus has $\chi = \sigma = 0$, we may assume $n\geq 1$.
	
	Lemma \ref{lem:DiCerboStover} yields an aspherical 4-manifold $W_\alpha$ with $\chi(W_\alpha)=\sigma(W_\alpha)=1$ and $\pi_1$-injective boundary $\partial W_\alpha = T(B)\sqcup T(B^3)$. Since 
	\[
	B^{4n}=\underbrace{(B \cdot B^3)\cdots (B \cdot B^3)}_{n \text{ times}},
	\]
	by Lemma~\ref{lem:prod} there is an oriented aspherical 4-manifold $W_\beta$ with $\chi(W_\beta)=0$ and $\pi_1$-injective boundary
	\[
	\partial W_\beta = -T(B^{4n}) \sqcup (T(B)\sqcup T(B^3)) \;\sqcup\; \dots \;\sqcup\;( T(B)\sqcup T(B^3)).
	\]
	Since $4n$ is even, by Corollary~\ref{cor:even_powers_are_fillable} there is an oriented aspherical 4-manifold $W_\gamma$ with $\chi(W_\gamma)=0$, $\sigma(W_\gamma)=1$ and $\pi_1$-injective boundary
	\[
	\partial W_\gamma=T(B^{4n}).
	\]
	We obtain $X_n$ by gluing together $n$ copies of $-W_\alpha$, one copy of $W_\beta$ and one copy of $W_\gamma$ along the corresponding diffeomorphic boundary components via the identity map.
	It follows that 
	\[
	\chi(X_n)=n\cdot\chi(W_\alpha) + \chi(W_\beta) + \chi(W_\gamma) = n.
	\]
	To compute the signature of $X$, we use Novikov additivity. 
	The only piece whose signature is not yet known is $W_\beta$. 
	By the Meyer signature formula, we know that $\sigma(W_\beta)=n\cdot\mathcal{M}(B)+n\cdot\mathcal{M}(B^3)+\mathcal{M}(B^{-4n})$, where $\mathcal{M} \colon SL_2(\Z)\rightarrow \tfrac{1}{3}\Z$ denotes the Meyer function of genus one \cite[Section 4]{Mey73}.
	Meyer computed that
	\[
	\mathcal{M}(B^k)=\mathrm{sign}(k)-\frac{k}{3}
	\]
	for every $k \in \Z$ \cite[p. 23]{Mey73} (see also \cite[Section 6]{Ati87}).
	A direct substitution shows that $\sigma(W_\beta)=2n-1$.
	We conclude that
	\[
	\sigma(X_n)=-n\cdot\sigma(W_\alpha)+ \sigma(W_\beta)+\sigma(W_\gamma)=-n + 2n -1 +1 = n,
	\]
	which completes the proof of Theorem \ref{thm:aspherical_manif}.
\end{proof}

\begin{rem}
	\label{rem:singer_conjecture}
	We compute the $L^2$-Betti numbers of the manifolds $X_n$.
	We retain the notation from the proof of Theorem~\ref{thm:aspherical_manif}.
	Since $X_n$ is obtained by gluing together compact aspherical manifolds along aspherical 3-manifolds with amenable fundamental group, the additivity properties of $L^2$-Betti numbers \cite[Theorems 1.21 and 1.35]{Lue02} imply that
	\[
	b_i^{(2)}(X_n)=n\cdot b_i^{(2)}(W_\alpha) + b_i^{(2)}(W_\beta) + b_i^{(2)}(W_\gamma)
	\]
	for every $i \in \N$.
	Since the universal covering of $W_\alpha$ is a symmetric space of non-compact type, it follows from \cite[Theorem 5.12]{Lue02} that
	\[
	b_2^{(2)}(W_\alpha) = \chi(W_\alpha) = 1, \qquad b_i^{(2)}(W_\alpha) = 0, \quad \text{when } i \neq 2.
	\]
	On the other hand, $W_\beta$ is the total space of a fibration with amenable fiber, so it is $L^2$-acyclic \cite[Lemma 1.41]{Lue02}.
	Finally, the fundamental group of $W_\gamma$ can be described as the fundamental group of a graph of groups with infinite amenable vertex and edge groups, so also $W_\gamma$ is $L^2$-acyclic \cite[Theorem 7.2]{Lue02}.
	It follows that the manifolds $X_n$ satisfy the Singer conjecture.
\end{rem}

To provide positive examples for Question~\ref{quest:euler_char}, we record the following building block, whose construction is based on the \emph{torus trick} by Foozwell and Rubinstein \cite{FR16}, applied to the torus leaf of a semi-bundle.
\begin{lemma}
	\label{lem:torus_trick_semi_bundles}
	Let $\phi, \psi \in \SL_2(\Z)$.
	There is a compact oriented connected aspherical 4-manifold $W$ with $\chi(W) = 0$ and $\pi_1$-injective aspherical boundary such that $\partial W = -N(\phi) \sqcup T(\psi^{-1})\sqcup N(\psi \circ \phi)$.
\end{lemma}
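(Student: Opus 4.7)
The plan is to adapt the doubling construction of Lemma~\ref{lem:double_covering_trick} to an asymmetric situation. Instead of gluing two copies of $N(\phi) \times I$ via the map $\xi$, I would glue $N(\phi) \times I$ to $N(\psi \circ \phi) \times I$ via the identity, along regular neighborhoods of the Klein bottle cores in suitable boundary components. The change in the semi-bundle monodromy from $\phi$ to $\psi \circ \phi$ is precisely what produces the extra torus bundle boundary component with monodromy $\psi^{-1}$.

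Concretely, set $V_1 = N(\phi) \times [0,1]$ with the standard product orientation and $V_2 = N(\psi \circ \phi) \times [2,3]$ with reversed orientation on the interval factor; under the outward-normal-first convention, $\partial V_1 = -N(\phi) \sqcup N(\phi)$ and $\partial V_2 = N(\psi \circ \phi) \sqcup -N(\psi \circ \phi)$. In the top faces, let $R, R'$ and $R''', R''''$ denote the two regular neighborhoods of the Klein bottle cores, each canonically diffeomorphic to $N$, whose complements in the top face are collars $T \times I$. Form $W$ by identifying $R$ with $R'''$, and $R'$ with $R''''$, via the canonical identity diffeomorphism of $N$. The choice of orientation on $V_2$ ensures that these identifications reverse induced boundary orientations, so $W$ is oriented. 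The asphericity of $W$ and the vanishing $\chi(W) = \chi(V_1) + \chi(V_2) - 2\chi(N) = 0$ are immediate from the standard gluing theorem and Euler characteristic additivity, since the gluings take place along the $\pi_1$-injective aspherical submanifolds $R, R' \cong N$.

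The boundary $\partial W$ then has three components: the untouched bottom faces of $V_1$ and $V_2$ contribute $-N(\phi)$ and $N(\psi \circ \phi)$, respectively; and the third is obtained by joining the two leftover collars $T \times I$ along their boundary tori, by the identity at one pair and by the composite $(\psi \circ \phi)^{-1} \cdot \phi = \phi^{-1}\psi^{-1}\phi$ at the other pair. This composite incorporates the internal monodromies $\phi$ and $\psi \circ \phi$ of the two collars together with the identity identification between the regular neighborhoods. The resulting torus bundle has monodromy conjugate in $\SL_2(\Z)$ to $\psi^{-1}$, and since conjugate monodromies determine diffeomorphic oriented torus bundles, this third component is diffeomorphic to $T(\psi^{-1})$. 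The $\pi_1$-injectivity of each boundary component in $W$ is then a standard Seifert--van Kampen argument.

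The main obstacle I anticipate is careful orientation bookkeeping: verifying that the reversed interval orientation on $V_2$, together with the canonical identity identifications on the Klein-bottle neighborhoods, yields the correct signs on the semi-bundle boundary components and makes the middle monodromy come out as $\psi^{-1}$ rather than $\psi$. As a sanity check, the construction correctly specializes when $\psi = \id$ to the boundary $-N(\phi) \sqcup T(\id) \sqcup N(\phi)$, in which the middle component is simply the 3-torus, as expected.
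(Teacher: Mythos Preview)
Your construction is correct and constitutes a genuinely different proof from the paper's. The paper does \emph{not} adapt the doubling construction of Lemma~\ref{lem:double_covering_trick}; instead it invokes the Foozwell--Rubinstein \emph{torus trick} \cite[Lemma~2.3]{FR16}: starting from the single block $W_1 = N(\phi)\times[0,1]$, it attaches a handle $T\times[-\epsilon,\epsilon]\times[0,1]$ to the top face $N(\phi)\times\{1\}$ along thickenings of two parallel copies $T_1,T_2$ of the torus leaf $T=\partial N$, with one attaching map twisted by $\psi$. The effect of this handle is to cut the top face along the torus leaf and reglue by $\psi$, turning it into $N(\psi\phi)$, while the middle strip together with the lateral boundary of the handle produces the $T(\psi^{-1})$ component. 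Asphericity and $\pi_1$-injectivity of all boundary components are then imported directly from \cite{FR16}.

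By contrast, your argument glues two product blocks $N(\phi)\times I$ and $N(\psi\phi)\times I$ along the Klein-bottle neighbourhoods via the identity (rather than via $\xi$, which is what produced the $\tau$ factors in Lemma~\ref{lem:double_covering_trick}). This has the virtue of being entirely internal to the paper's toolkit, at the cost of slightly heavier bookkeeping: you must assemble two blocks instead of one, and the $\pi_1$-injectivity of the torus-bundle boundary component, which the paper outsources to \cite{FR16}, needs an explicit Bass--Serre/normal-form argument in your setting. That argument is indeed routine (the fibre $\pi_1(T)$ injects via the edge groups, and the base loop maps to a nontrivial stable letter in the graph of groups with two vertices and two edges), but calling it ``a standard Seifert--van Kampen argument'' undersells the work a little. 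Your orientation analysis and the monodromy computation $(\psi\phi)^{-1}\phi = \phi^{-1}\psi^{-1}\phi\sim\psi^{-1}$ are correct, and the $\psi=\id$ sanity check is apt.
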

\begin{proof}
	We write $N(\phi)=N\cup_\phi -N$, where $N$ and $-N$ meet along their common toric boundary $T=\partial N = \partial (-N)$. 
	Note that $T$ is incompressible in $N(\phi)$.
	
	The existence of $W$ is ensured by \cite[Lemma~2.3]{FR16}, which provides an aspherical filling for the disjoint union of any 3-manifold $M$, any 3-manifold obtained from $M$ by splitting and regluing along an incompressible surface, and a suitable torus bundle.
	We now outline Foozwell and Rubinstein's construction in the case $M=N(\phi)$.
	Observe that any manifold obtained by splitting $N(\phi)$ along $T$ and regluing the resulting boundary components via a diffeomorphism is again a semi-bundle, possibly with a different attaching map. 
	
	Let $W_1=N(\phi)\times [0,1]$, where $[0,1]$ carries the positive orientation from $\R$, and endow $W_1$ with the product orientation. The two boundary components of $W_1$ then inherit opposite orientations:
	\[
	N(\phi)\times \{0\}=-N(\phi),\qquad N(\phi)\times \{1\}=N(\phi).
	\]
	We identify a regular neighborhood $R$ of $T=\partial (-N)$ inside $-N$ with the product $T\times [0,1]=R\subseteq -N\subseteq N(\phi)$, where $T\times \{0\}$ corresponds to $\partial (-N)$.
	We pick two disjoint tori in $R\times \{1\}\subseteq W_1$ parallel to $T$, say $T_i=T\times \{i/3\}\times \{1\}$ for $i \in \{1,2\}$ (see Figure~\ref{fig:torus_trick}).
	\begin{figure}[htbp]
		\centering
		\includegraphics[scale=1]{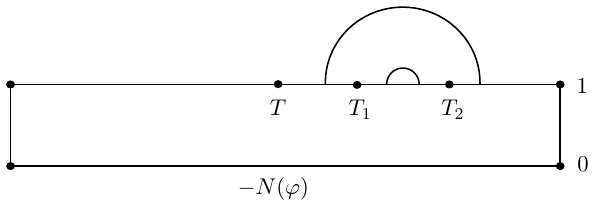}
		\caption{Gluing scheme illustrating Lemma~\ref{lem:torus_trick_semi_bundles}}
		\label{fig:torus_trick}
	\end{figure}

	Let $\epsilon$ be a positive real number such that the $\epsilon$-regular neighborhoods
	\[
	T_i(\epsilon)=T \times [i/3-\epsilon, i/3+\epsilon]\times  \{1\}
	\]
	of $T_i$ in $N(\phi)\times \{1\}$ are disjoint.
	The manifold $W$ is obtained by attaching a copy of $T\times [-\epsilon,\epsilon]\times [0,1]$ to $W_1$ so that $T\times [-\epsilon,\epsilon]\times \{0\}$ is identified with $T_1(\epsilon)$, and $T\times [-\epsilon,\epsilon]\times \{1\}$ is identified with $T_2(\epsilon)$.
	The attaching maps are chosen as follows:
	\begin{align*}
		T\times [-\varepsilon, \varepsilon]\times \{0\} \rightarrow T_1(\varepsilon), & \qquad (x,t,0)\mapsto (x, 1/3-t,1),\\
		T\times [-\varepsilon, \varepsilon]\times \{1\}\rightarrow T_2(\varepsilon), &\qquad (x,t,1)\mapsto (g(x), 2/3-t,1),
	\end{align*}
	where $g$ is any orientation-preserving diffeomorphism representing $\psi$.
	With these choices, $W$ is oriented, with boundary
	\[
	\partial W = -N(\phi) \sqcup T(\psi^{-1})\sqcup N(\psi \circ \phi).
	\]
	Moreover, by \cite[Lemma 2.3]{FR16}, $W$ is aspherical and each boundary component is $\pi_1$-injective.
\end{proof}
\begin{cor}
	\label{cor:semibundle_fillable_iff_torus_bundle}
	Let $\psi \in \SL_2(\Z)$. Then $N(\psi)$ has an aspherical filling with Euler characteristic $n \in \N$ if and only if $T(\psi)$ does.
\end{cor}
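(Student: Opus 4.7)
The plan is to derive both implications from Lemma~\ref{lem:torus_trick_semi_bundles} by specializing its free parameters in two symmetric ways, after which a residual auxiliary $N(\mathrm{id})$ boundary component must be capped off by a direct construction.

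For the ``only if'' direction, suppose an aspherical filling $F$ of $N(\psi)$ with $\chi(F) = n$ is given. I would apply Lemma~\ref{lem:torus_trick_semi_bundles} with $\phi = \psi$ and the lemma's free parameter ``$\psi$'' replaced by $\psi^{-1}$, so that their composite is the identity; this produces a compact oriented aspherical $4$-manifold $W$ with $\chi(W) = 0$ and $\pi_1$-injective boundary $\partial W = -N(\psi) \sqcup T(\psi) \sqcup N(\mathrm{id})$. Gluing $F$ to $W$ along $-N(\psi)$ via the identity preserves asphericity and $\pi_1$-injectivity of the remaining boundary, and by additivity of the Euler characteristic over $3$-manifolds with $\chi = 0$, the resulting manifold has $\chi = n$ and boundary $T(\psi) \sqcup N(\mathrm{id})$.

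Symmetrically, for the ``if'' direction, the choice $\phi = \mathrm{id}$ in Lemma~\ref{lem:torus_trick_semi_bundles} produces an aspherical $4$-manifold with $\chi = 0$ and boundary $-N(\mathrm{id}) \sqcup T(\psi^{-1}) \sqcup N(\psi)$; gluing a given filling of $T(\psi)$ with $\chi = n$ along $T(\psi^{-1}) \cong -T(\psi)$ yields a manifold with boundary $N(\psi) \sqcup (-N(\mathrm{id}))$ and $\chi = n$.

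In both cases the construction is completed by capping off the auxiliary $N(\mathrm{id})$ component with an aspherical filling of vanishing Euler characteristic and $\pi_1$-injective boundary; this is the main step I anticipate. Recall that $N(\mathrm{id})$ is the double of $N = K \twprod I$, equivalently the orientable $S^1$-bundle over the Klein bottle $K$ with reflection monodromy. I would construct the required filling as the bundle $K \twprod F$ over $K$, with fiber $F = T^2 \setminus D^2$ a once-punctured torus and with monodromy around the non-orientable loop of $K$ given by the restriction to $F$ of the orientation-reversing involution $(u,v) \mapsto (u, -v)$ of the underlying torus, where the disk $D^2$ is placed around the origin so that the involution descends to $F$. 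Orientability follows by pairing the orientation-reversing loop in $K$ with the orientation-reversing monodromy on $F$; asphericity is automatic; the Euler characteristic vanishes because $\chi(K) \chi(F) = 0$; the boundary is $K \twprod S^1 = N(\mathrm{id})$ with reflection monodromy, as desired; and $\pi_1$-injectivity of the boundary follows, via a Five Lemma argument on the fundamental-group extensions associated with the two bundles, from the fact that the boundary circle of $F$ represents the nontrivial commutator $[a,b] \in \pi_1(F) = F_2$.
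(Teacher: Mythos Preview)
Your argument is correct, and the overall structure---reduce to Lemma~\ref{lem:torus_trick_semi_bundles} and then cap off the residual $N(\mathrm{id})$---matches the paper's. Two differences are worth noting.

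First, a minor one: you invoke Lemma~\ref{lem:torus_trick_semi_bundles} twice, with different parameter choices for the two implications. The paper applies it once (with $\phi=\mathrm{id}$) to obtain a single cobordism with boundary $-N(\mathrm{id})\sqcup T(\psi^{-1})\sqcup N(\psi)$, which after capping $N(\mathrm{id})$ serves both directions, since an aspherical filling of $T(\psi)$ with $\chi=n$ is the same thing as one of $-T(\psi)=T(\psi^{-1})$. Your version is not wrong, just slightly redundant.

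Second, and more substantively, your filling of $N(\mathrm{id})$ is genuinely different from the paper's. The paper identifies $N(\mathrm{id})\cong T(-\mathrm{id})$ and observes that $T(-\mathrm{id})$ is the orientation double cover of a non-orientable flat $3$-manifold $M$; the twisted interval bundle $M\twprod I$ is then the desired filling. This is quick but imports the classification of flat $3$-manifolds. Your construction---a once-punctured-torus bundle over $K$ with reflection monodromy---is entirely self-contained and explicit, at the cost of the extra verifications (orientability, boundary identification, $\pi_1$-injectivity via the Five Lemma), all of which you carry out correctly. The two fillings are in fact not even homotopy equivalent: the paper's has virtually abelian fundamental group (a $3$-dimensional crystallographic group), whereas yours has $\pi_1$ containing a free group of rank~$2$. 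So you have produced a new aspherical filling of $N(\mathrm{id})$ with vanishing Euler characteristic.
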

\begin{proof}
	By Lemma~\ref{lem:torus_trick_semi_bundles} with $\phi = \id$, there is a compact oriented connected aspherical 4-manifold $W$ with $\chi(W)=0$ and $\pi_1$-injective boundary $\partial W = -N(\id) \sqcup T(\psi^{-1})\sqcup N(\psi)$.
	We observe that $N(\id)$ is diffeomorphic to $T(-\id)$ \cite[Exercise 11.4.11]{Mar}.
	The manifold $T(-\id)$ is the orientable double cover of a non-orientable flat 3-manifold $M$ \cite{HW35} \cite[p.6]{CR}. 
	Hence the unique interval bundle $M \twprod I$ over $M$ with orientable total space provides an aspherical filling of $T(-\id)$ with vanishing Euler characteristic.
	It follows that the boundary component $N(\id)$ of $W$ can be capped off by an aspherical filling without changing the Euler characteristic.
	The claim then follows.
\end{proof}
Corollary~\ref{cor:semibundle_fillable_iff_torus_bundle} implies that a semi-bundle $N(\psi)$ satisfies Question~\ref{quest:euler_char} if and only if the corresponding torus bundle $T(\psi)$ does.

\section{Computing the simplicial volume}
\label{sec:computing_simplicial_volume}

In this section, we compute the simplicial volume of the building blocks introduced above and we prove Theorem~\ref{thm:virtual_small_aspherical_fillings}.

Simplicial volume is a non-negative real number $\|W,\partial W\|$ associated with every oriented compact connected topological $n$-manifold $W$ (possibly with non-empty boundary), defined as the infimum of the $\ell^1$-norms of real fundamental cycles \cite{Gro82}. 
For closed manifolds, simplicial volume is a homotopy invariant and is known to vanish whenever the fundamental group is amenable.

For manifolds with boundary, the situation is slightly more complicated. The standard construction of null-cobordisms of 3-manifolds via handle attachments from a Dehn surgery presentation -- based on the Lickorish–Wallace theorem \cite{Wal60, Lic62} -- shows that every closed 3-manifold bounds a simply connected 4-manifold. 
It follows that every 3-manifold with vanishing simplicial volume bounds a 4-manifold whose simplicial volume also vanishes \cite[Example 3.2(3)]{LMR22}.

On the other hand, since every relative fundamental cycle of a manifold induces a fundamental cycle of the boundary, one obtains the elementary inequality
\[
\|W, \partial W\| \geq \frac{\|\partial W\|}{(n+1)},
\]
which provides a simple obstruction for a 3-manifold to bound a 4-manifold with vanishing simplicial volume.
For instance, every null-cobordism of a closed hyperbolic manifold — which is known to have positive simplicial volume \cite{Gro82} — must itself have positive simplicial volume.

In the context of aspherical fillings, it is natural to restrict our attention to aspherical 3-manifolds with amenable fundamental group \cite[Section 4.2.2]{LMR22}.
Indeed, we have already seen that asphericity of fillings is preserved by gluing $\pi_1$-injective aspherical boundary components.
On the other hand, gluing manifolds along amenable $\pi_1$-injective boundaries preserves the vanishing of simplicial volume by Gromov's Additivity Theorem \cite{Gro82} \cite[Theorem 3]{BBFIPP14} \cite[Theorem 4]{Cap}. Therefore, the underlying strategy to build aspherical fillings preserves the vanishing of simplicial volume, provided that the building blocks themselves have vanishing simplicial volume.\\

A classification of closed connected 3-manifolds with amenable fundamental group is well known \cite{Sco83} \cite{AFW15}.
A list of such manifolds is given, for instance, in \cite[Corollary 2]{GGH13}. 
They are precisely the 3-manifolds that fall into one of the following classes:
\begin{enumerate}
	\item Seifert manifolds with non-negative orbifold Euler characteristic;
	\item Torus bundles;
	\item Torus semi-bundles.
\end{enumerate}
Requiring the 3-manifolds in the above list to be aspherical is equivalent to excluding the Seifert manifolds with positive orbifold Euler characteristic, since their universal cover is not contractible \cite[Theorem 5.3]{Sco83}.

The boundary components of all the building blocks considered in the previous sections are aspherical 3-manifolds with amenable fundamental group.
Moreover, every aspherical filling with vanishing Euler characteristic constructed above also has vanishing simplicial volume:
\begin{lemma}
	\label{lem:vanishing_sv}
	The manifolds from Lemma~\ref{lem:prod}, Lemma~\ref{lem:commutator}, Lemma~\ref{lem:double_covering_trick} and Lemma~\ref{lem:torus_trick_semi_bundles} all have vanishing simplicial volume.
\end{lemma}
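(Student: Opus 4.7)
The plan is to decompose each of the four manifolds $W$ into elementary pieces of vanishing relative simplicial volume, glued along codimension-one submanifolds with amenable fundamental group, and then to invoke Gromov's Additivity Theorem -- as recalled at the beginning of this section -- to conclude that $\|W, \partial W\| = 0$.

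The manifolds of Lemma~\ref{lem:prod} and Lemma~\ref{lem:commutator} are themselves total spaces of torus bundles over compact oriented surfaces with boundary; in particular, both they and each of their boundary components contain the amenable normal subgroup $\mathbb{Z}^2 = \pi_1(T^2)$. By the vanishing of bounded cohomology of groups admitting an amenable normal subgroup, together with the duality between relative bounded cohomology and relative simplicial volume, we obtain $\|W, \partial W\| = 0$ directly. Equivalently, the fiberwise $S^1$-action provides an F-structure compatible with the boundary, which also forces the relative simplicial volume to vanish.

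The manifold of Lemma~\ref{lem:double_covering_trick} is written explicitly as $W_- \cup W_+$ with $W_{\pm} = N(\phi) \times I$, glued along regular neighborhoods of Klein bottles each diffeomorphic to $N$. The manifold of Lemma~\ref{lem:torus_trick_semi_bundles} is built from $N(\phi) \times I$ and a slab $T \times [-\varepsilon, \varepsilon] \times I$ glued along toric neighborhoods. In both cases each elementary piece is a product $M \times I$ where $M$ is a closed 3-manifold with amenable fundamental group -- a torus semi-bundle $N(\phi)$, a 3-torus, or $N$ -- so that both $\pi_1(M \times I)$ and $\pi_1(\partial(M \times I))$ are amenable. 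The relative simplicial volume of each piece vanishes, either via the product inequality $\|M \times I, \partial\| \leq c \, \|M\| \, \|I, \partial I\|$ combined with $\|M\| = 0$, or directly from the relative bounded-cohomology argument. All gluing loci in the two constructions are aspherical 3-manifolds with amenable fundamental group (neighborhoods of Klein bottles, tori, or semi-bundles), so Gromov's Additivity Theorem gives $\|W, \partial W\| = 0$ in both cases.

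The main point requiring care is the extension of the vanishing statement from closed to bounded manifolds in the torus-bundle case of Lemmas~\ref{lem:prod} and \ref{lem:commutator}: Gromov's classical vanishing theorem for an amenable normal subgroup is formulated for closed manifolds, so for the relative setting one must check that the amenable normal subgroup structure is compatible with the inclusion of the boundary. In our situation this compatibility is immediate, since every boundary component is itself a torus bundle over $S^1$ sharing the same $\mathbb{Z}^2$ fiber, and the relative form of the theorem then fits within the framework of the additivity and amenability results already cited at the start of this section.
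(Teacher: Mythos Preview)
Your argument splits naturally into two halves, and they fare differently.

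For the manifolds of Lemma~\ref{lem:prod} and Lemma~\ref{lem:commutator} your strategy is genuinely different from the paper's. The paper doubles $W$ along its boundary, observes that $D(W)$ is a closed torus bundle over a closed surface (hence has vanishing simplicial volume by the fiber-with-amenable-$\pi_1$ result), and then halves via additivity. You instead go through bounded cohomology. This works, but your formulation is inaccurate: a group with an infinite amenable normal subgroup does \emph{not} have vanishing bounded cohomology in general; what one has is $H_b^*(\Gamma)\cong H_b^*(\Gamma/A)$. In your situation $\Gamma/A$ is the fundamental group of a bounded surface, hence free, so $H_b^{\geq 2}$ vanishes and the long exact sequence of the pair then kills $H_b^4(W,\partial W)$. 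That is the actual argument, and it should be stated as such. Also, there is no ``fiberwise $S^1$-action'' on a generic torus bundle: the monodromy need not commute with any circle subgroup of $T^2$. What you have is a $T^2$-fibered F-structure, not a circle action; drop that sentence or rephrase it.

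For the manifolds of Lemma~\ref{lem:double_covering_trick} and Lemma~\ref{lem:torus_trick_semi_bundles} there is a real gap. Gromov's Additivity Theorem, in the form recalled at the start of this section (and in the references \cite{BBFIPP14}, \cite{Cap}), concerns gluings along \emph{closed} boundary components. In both constructions the gluing loci are copies of $N=K\twprod I$ or of $T\times[-\varepsilon,\varepsilon]$, which are compact $3$-manifolds \emph{with boundary}, sitting as codimension-$0$ pieces inside $\partial W_{\pm}$. The resulting identification creates corners, and the standard additivity statement does not apply. Your product inequality shows each piece has vanishing relative simplicial volume, but you cannot conclude for $W$ by citing additivity as stated. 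The paper sidesteps this entirely: it observes that the very decomposition you wrote down yields an open amenable cover of $W$ of multiplicity~$2$ (thicken $W_-$ and $W_+$), and then invokes the relative vanishing theorem for amenable covers \cite[Theorem~3.13]{LMR22}. Your pieces are the right ones; you are just appealing to the wrong theorem at the final step.
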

\begin{proof}
	Let $W$ be one of the manifolds constructed in Lemmas~\ref{lem:prod} or~\ref{lem:commutator}. We know that $W$ is the total space of a fiber bundle over a compact surface with torus fiber. Hence its double $D(W)$, which is closed and is itself the total space of a fiber bundle with amenable fiber, has vanishing simplicial volume \cite{Gro82} \cite[Theorem 7.14]{Fri17}. Since the boundary of $W$ is amenable and $\pi_1$-injective, Gromov's Additivity Theorem implies $D(W)=2\cdot\|W,\partial W\|=0$, proving the claim.
	
	On the other hand, if $W$ is one of the manifolds constructed in Lemmas~\ref{lem:double_covering_trick} or~\ref{lem:torus_trick_semi_bundles}, it is straighforward from the construction to exhibit an amenable open cover of $W$ of multiplicity $2$. Hence, the simplicial volume of $W$ vanishes as a consequence of the relative vanishing theorem in presence of amenable open covers of small multiplicity \cite{Gro82} \cite[Theorem 3.13]{LMR22}.
\end{proof}

Lemma~\ref{lem:vanishing_sv} implies that every torus bundle with monodromy in $\SL_2(\Z)'$ satisfies Question~\ref{quest:sv}, i.e., it has aspherical fillings with vanishing simplicial volume. The same holds for monodromies of the form $\phi \tau \phi^{-1}\tau$ for all $\phi \in \SL_2(\Z)$, as in Lemma~\ref{lem:double_covering_trick}.
Furthermore, the simplicial volume is also preserved by the constructions underlying Corollary~\ref{cor:modulo_derived_subgroup} and Corollary~\ref{cor:semibundle_fillable_iff_torus_bundle}.
Therefore, verifying Question~\ref{quest:sv} for all torus bundles reduces to consider monodromies modulo $\SL_2(\Z)'$.
Finally, the proof of Corollary~\ref{cor:semibundle_fillable_iff_torus_bundle} shows that a semi-bundle $N(\psi)$ admits an aspherical filling with vanishing simplicial volume if and only if the corresponding torus bundle $T(\psi)$ does.

\begin{rem}
	\label{rem:sv_examples}
	The manifold $W$ from Lemma~\ref{lem:DiCerboStover}, constructed by Di Cerbo and Stover, has positive simplicial volume, since its interior carries a finite-volume complex hyperbolic metric, thus with pinched negative sectional curvature \cite{Gro82}. 
	By the proportionality principle, its simplicial volume is proportional to the Riemannian volume of its interior up to a universal proportionality constant \cite{LS09, KK15}. 
	Lemma~\ref{lem:vanishing_sv} then implies that the manifolds $X_n$ from Theorem~\ref{thm:aspherical_manif} satisfy $\|X_n\| = n \cdot \|W,\partial W\|$, which is positive for every $n\geq 1$.
\end{rem}

The problem of finding aspherical fillings with vanishing simplicial volume or Euler characteristic becomes straightforward if passing to finite covers is allowed.


\begin{proof}[Proof of Theorem~\ref{thm:virtual_small_aspherical_fillings}]
	We want to show that every oriented closed connected aspherical 3-manifold with amenable fundamental group has a finite cover which admits an aspherical filling with vanishing Euler characteristic and vanishing simplicial volume.
	Since every closed connected aspherical 3-manifold with amenable fundamental group is finitely covered by a torus bundle \cite{Sco83} \cite[Table 1]{AFW15}, it suffices to restrict our attention to torus bundles.
	
	As mentioned above, the abelianization $\SL_2(\mathbb{Z})^{\ab}$ of $\SL_2(\mathbb{Z})$ is a finite cyclic group of order 12 (see, for instance, \cite[Theorem VIII.2]{New72}).	 
	Let $T(\phi)$ denote the torus bundle with monodromy $\phi \in \SL_2(\Z)$. 
	Since the class of $\phi$ in $\SL_2(\mathbb{Z})^{\ab}$ has order at most 12, it follows that $\phi^{12}$ lies in the derived subgroup $\SL_2(\Z)'$. By Lemma~\ref{lem:commutator} and Lemma~\ref{lem:vanishing_sv}, $T(\phi^{12})$ admits an aspherical filling with vanishing simplicial volume and vanishing Euler characteristic.
	Since $T(\phi^{12})$ finitely covers $T(\phi)$, this completes the proof of Theorem~\ref{thm:virtual_small_aspherical_fillings}.
\end{proof}

\bibliography{bib}
\bibliographystyle{fram_alpha}
\end{document}